\newcommand{\F}{{\mathbb F}}
\newcommand{\Q}{{\mathbb Q}}
 \DeclareMathOperator{\tr}{tr}
\begin{document}

\newtheorem{theorem}{Theorem}

\newtheorem{corollary}{Corollary}
\newtheorem{corol}{Corollary}
\newtheorem{conj}{Conjecture}

\theoremstyle{definition}
\newtheorem{defn}{Definition}
\newtheorem{example}{Example}

\newtheorem{remarks}{Remarks}
\newtheorem{remark}[remarks]{Remark}

\newtheorem{question}{Question}
\newtheorem{problem}{Problem}

\newtheorem{quest}{Question}

\def\toeq{{\stackrel{\sim}{\longrightarrow}}}
\def\into{{\hookrightarrow}}


\def\alp{{\alpha}}  \def\bet{{\beta}} \def\gam{{\gamma}}
 \def\del{{\delta}}
\def\eps{{\varepsilon}}
\def\kap{{\kappa}}                   \def\Chi{\text{X}}
\def\lam{{\lambda}}
 \def\sig{{\sigma}}  \def\vphi{{\varphi}} \def\om{{\omega}}
\def\Gam{{\Gamma}}   \def\Del{{\Delta}}
\def\Sig{{\Sigma}}   \def\Om{{\Omega}}
\def\ups{{\upsilon}}


\def\F{{\mathbb{F}}}
\def\BF{{\mathbb{F}}}
\def\BN{{\mathbb{N}}}
\def\Q{{\mathbb{Q}}}
\def\Ql{{\overline{\Q }_{\ell }}}
\def\CC{{\mathbb{C}}}
\def\R{{\mathbb R}}
\def\V{{\mathbf V}}
\def\D{{\mathbf D}}
\def\BZ{{\mathbb Z}}
\def\K{{\mathbf K}}
\def\XX{\mathbf{X}^*}
\def\xx{\mathbf{X}_*}

\def\AA{\Bbb A}
\def\BA{\mathbb A}
\def\HH{\mathbb H}
\def\PP{\Bbb P}

\def\Gm{{{\mathbb G}_{\textrm{m}}}}
\def\Gmk{{{\mathbb G}_{\textrm m,k}}}
\def\GmL{{\mathbb G_{{\textrm m},L}}}
\def\Ga{{{\mathbb G}_a}}

\def\Fb{{\overline{\F }}}
\def\Kb{{\overline K}}
\def\Yb{{\overline Y}}
\def\Xb{{\overline X}}
\def\Tb{{\overline T}}
\def\Bb{{\overline B}}
\def\Gb{{\bar{G}}}
\def\Ub{{\overline U}}
\def\Vb{{\overline V}}
\def\Hb{{\bar{H}}}
\def\kb{{\bar{k}}}

\def\Th{{\hat T}}
\def\Bh{{\hat B}}
\def\Gh{{\hat G}}


\def\cC{{\mathcal C}}
\def\cU{{\mathcal U}}
\def\cP{{\mathcal P}}
\def\cV{{\mathcal V}}
\def\cS{{\mathcal S}}

\def\CG{\mathcal{G}}

\def\cF{{\mathcal {F}}}

\def\Xt{{\widetilde X}}
\def\Gt{{\widetilde G}}


\def\hh{{\mathfrak h}}
\def\lie{\mathfrak a}

\def\XX{\mathfrak X}
\def\RR{\mathfrak R}
\def\NN{\mathfrak N}

\def\minus{^{-1}}

\def\GL{\textrm{GL}}            \def\Stab{\textrm{Stab}}
\def\Gal{\textrm{Gal}}          \def\Aut{\textrm{Aut\,}}
\def\Lie{\textrm{Lie\,}}        \def\Ext{\textrm{Ext}}
\def\PSL{\textrm{PSL}}          \def\SL{\textrm{SL}} \def\SU{\textrm{SU}}
\def\loc{\textrm{loc}}
\def\coker{\textrm{coker\,}}    \def\Hom{\textrm{Hom}}
\def\im{\textrm{im\,}}           \def\int{\textrm{int}}
\def\inv{\textrm{inv}}           \def\can{\textrm{can}}
\def\id{\textrm{id}}              \def\Char{\textrm{char}}
\def\Cl{\textrm{Cl}}
\def\Sz{\textrm{Sz}}
\def\ad{\textrm{ad\,}}
\def\SU{\textrm{SU}}
\def\Sp{\textrm{Sp}}
\def\PSL{\textrm{PSL}}
\def\PSU{\textrm{PSU}}
\def\rk{\textrm{rk}}
\def\PGL{\textrm{PGL}}
\def\Ker{\textrm{Ker}}
\def\Ob{\textrm{Ob}}
\def\Var{\textrm{Var}}
\def\poSet{\textrm{poSet}}
\def\Al{\textrm{Al}}
\def\Int{\textrm{Int}}
\def\Smg{\textrm{Smg}}
\def\ISmg{\textrm{ISmg}}
\def\Ass{\textrm{Ass}}
\def\Grp{\textrm{Grp}}
\def\Com{\textrm{Com}}
\def\rank{\textrm{rank}}

\def\char{\textrm{char}}

\def\wid{\textrm{wd}}

\newcommand{\Or}{\operatorname{O}}

\def\tors{_\def{\textrm{tors}}}      \def\tor{^{\textrm{tor}}}
\def\red{^{\textrm{red}}}         \def\nt{^{\textrm{ssu}}}

\def\sss{^{\textrm{ss}}}          \def\uu{^{\textrm{u}}}
\def\mm{^{\textrm{m}}}
\def\tm{^\times}                  \def\mult{^{\textrm{mult}}}

\def\uss{^{\textrm{ssu}}}         \def\ssu{^{\textrm{ssu}}}
\def\comp{_{\textrm{c}}}
\def\ab{_{\textrm{ab}}}

\def\et{_{\textrm{\'et}}}
\def\nr{_{\textrm{nr}}}

\def\nil{_{\textrm{nil}}}
\def\sol{_{\textrm{sol}}}
\def\End{\textrm{End\,}}

\def\til{\;\widetilde{}\;}

\def\min{{}^{-1}}

\def\AGL{{\mathbb G\mathbb L}}
\def\ASL{{\mathbb S\mathbb L}}
\def\ASU{{\mathbb S\mathbb U}}
\def\AU{{\mathbb U}}


\title[Equations in simple groups and algebras] {Word equations in simple groups and
\\ polynomial equations in simple algebras}

\author[Kanel-Belov, Kunyavski\u\i , Plotkin]{Alexey Kanel-Belov,
Boris Kunyavski\u\i , Eugene Plotkin}

\address{Kanel-Belov: Department of
Mathematics, Bar-Ilan University, 5290002 Ramat Gan, ISRAEL}
\email{beloval@macs.biu.ac.il}

\address{Kunyavskii: Department of
Mathematics, Bar-Ilan University, 5290002 Ramat Gan, ISRAEL}
\email{kunyav@macs.biu.ac.il}

\address{Plotkin: Department of
Mathematics, Bar-Ilan University, 5290002 Ramat Gan, ISRAEL}
\email{plotkin@macs.biu.ac.il}

\begin{abstract}
We give a brief survey of recent results on word maps on simple
groups and polynomial maps on simple associative and Lie algebras.
Our focus is on parallelism between these theories, allowing one to
state many new open problems and giving new ways for solving older
ones.
\end{abstract}

\dedicatory{To Kolya Vavilov, friend and colleague, on the occasion
of the 60th anniversary}

\maketitle

\epigraph{There were different times: a time to throw stones, a time
to divide and subtract. Now it is a time to add and multiply.
Circumstances force us to focus on adding and multiplying.}{\it
{From an interview of Mikhail Silin, first vice-rector on research
of Gubkin Russian State University of Oil and Gas to the newspaper
``Vestnik of Murmansk'', December 4, 2009}}

Keywords: word map; simple group; engel words; dominant map; group width; polynomial map.

\medskip

\section{Introduction} \label{intro}
In this paper we discuss word maps
\begin{equation} \label{wordmap}
w\colon G^d\to G,
\end{equation}
induced on any group $G$ by a group word $w=w(x_1,\dots,x_d)$ in
$x_1,x_1^{-1},\dots ,x_d,x_d^{-1}$ (= an element of the free group
$\cF_d$). We also consider polynomial maps
\begin{equation} \label{polmap}
P\colon A^d\to A,
\end{equation}
induced on any associative (or Lie) algebra over a field $k$ (or a
ring $R$) by an associative (or Lie) polynomial $P=P(X_1,\dots,X_d)$
in $d$ variables (= an element of the free associative (or Lie)
algebra).

Both \eqref{wordmap} and \eqref{polmap} are evaluation maps: one
substitutes $d$-tuples of elements of the group $G$ (algebra $A$)
instead of the variables and computes the value by performing all
group (algebra) operations. We are interested in surjectivity of
maps (\ref{wordmap}) and (\ref{polmap}), or, more generally, in
description of their images. In lowbrow terms, we are interested in
solvability of equations of the form

\begin{equation} \label{word}
w(x_1,\dots ,x_d)=g,
\end{equation}
or
\begin{equation} \label{pol}
P(X_1,\dots ,X_d)=M,
\end{equation}

\noindent for every right-hand side, or for some ``typical''
right-hand side, or whether every element of the group (algebra)
admits a representation as a product (sum) of finite number values
of the word (polynomial) map, etc.

This setting is a particular case of the following one. Let $\Theta$
be a variety of algebras, $H$ be an algebra in $\Theta$, $W(X)$ be a
free finitely generated algebra in $\Theta$ with generators $x_1,
\dots, x_d$. Fix $w\in W(X)$ and consider the word map $w\colon H^d
\to H.$ Varying $\Theta$ and $H\in\Theta$, we arrive at the problem
of solvability of equations in different varieties over different
algebras. In this note we restrict ourselves to the varieties of
groups, associative algebras, and Lie algebras.

Whereas in the group case the theory has been intensely developing
and led to several spectacular results, including answering some
old-standing questions, on the ring-theoretic side much less is
known, though some new approaches to not less old questions have
been recently found. Our main goal in this survey, which does not
pretend to be comprehensive, is to emphasize parallels between the
two theories. We hope that this may bring cross-fertilization
effects in near future. With an eye towards such a unification, we
put here more questions than answers.

The interested reader is referred to the monograph \cite{Se} and
surveys \cite{Sh1}, \cite{Ni}, \cite{BGK} in what concerns the group
case. Some references on the algebra case can be found in
\cite{KBMR}, \cite{BGKP}. We leave aside extremely interesting
questions on the number of solutions of equations \eqref{word},
\eqref{pol} (or, in other words, on the structure of the fibres of
maps \eqref{wordmap}, \eqref{polmap}). See \cite{BGK} for an
overview of some results in this direction.

\section{Value sets}

Given a word map \eqref{wordmap}, it is important to distinguish
between the following objects:
\begin{itemize}
\item the value set in strict sense: $w(G)=\{g\in G: \exists
(g_1,\dots ,g_d) \quad w(g_1,\dots ,g_d)=g\}$;
\item the symmetrized value set $w(G)^{\pm}$ consisting of the
elements of $w(G)$ and their inverses;
\item the verbal subgroup $\left<w(G)\right>$ of $G$ generated by
$w(G)$.
\end{itemize}
Respectively, given a polynomial map \eqref{polmap} of $k$-algebras,
we distinguish between the value set $P(A)=\{a\in A: \exists
(a_1,\dots ,a_d) \quad P(a_1,\dots ,a_d)=a\}$ and the vector space
$\left<P(A)\right>$ spanned by $P(A)$ over $k$.

It is usually much easier to describe $\left<w(G)\right>$ and
$\left<P(A)\right>$ than the actual value sets.

\subsection{Surjectivity}

We start with the group case and ask whether word map
\eqref{wordmap} is surjective. More precisely,
\begin{itemize}
\item given a class of groups $\mathcal G$, we want to describe words $w$ for
which map \eqref{wordmap} is surjective;
\item given a class of words $\mathcal W$, we want to describe groups $G$ for
which map \eqref{wordmap} is surjective.
\end{itemize}

In each of these set-ups arising problems range from very easy to
extremely difficult, depending on the choice of the class. Here are
some examples. Let us start with the first approach and take
$\mathcal G$ to be the variety of all groups. Then the needed
description is given by \cite[Lemma~3.1.1]{Se}, where such words are
called universal: they are all of the form $w=x^{e_1}\dots
x_d^{e_d}w'$ where $w'$ is a product of commutators and
gcd$(e_1,\dots,e_d)=1$. Here is a parallel question for associative
algebras:

\begin{quest}
What are polynomials $P$ such that the map $P\colon A^d\to A$ is
surjective for all associative algebras $A$?
\end{quest}

When we restrict the class $\mathcal G$, we arrive at more
interesting and difficult questions. Answers heavily depend on this
choice. For instance, a theorem of Rhemtulla (see
\cite[Theorem~3.1.2]{Se}) says that if $\mathcal G$ consists of the
free groups (and maybe also of some free products adjoined), then
the behaviour of {\it any} non-universal word is very far from
surjectivity: in the terminology explained below, any such word is
of infinite width for any group of $\mathcal G$. That is why in this
survey we prefer to stay away from equations in free (and close to
free) groups; see, e.g., \cite{CRK}, particularly the introduction,
for a comprehensive bibliographical survey of vast literature on
this fascinating theory.

We mainly focus on another extreme case of {\it simple} groups which
can also be viewed as a building block for some more general theory.
Here is our first question.

\begin{quest} \label{sur-adj}
Let $\mathcal G$ be the class of simple groups $G$ of the form
$G=\mathbb G(k)$ where $k=\bar k$ is an algebraically closed field
and $\mathbb G$ is a semisimple adjoint linear algebraic group. Is
it true that for all non-power words $w\ne 1$ word map \eqref{wordmap} is
surjective?
\end{quest}

\begin{remarks}
\begin{itemize}
\item[(i)] If we drop the assumption $k=\bar k$, or allow $G$ to be not
of adjoint type (i.e., consider quasisimple groups), there are easy
counter-examples to surjectivity  \cite{My}, \cite{Bo}. On the other
hand, if $G=SU(n)$ and $w(x,y)$ is any word not belonging to the
second derived subgroup of $\cF_2$, then the induced word map is
surjective for infinitely many $n$ \cite{ET}. However, if $w$ does
belong to the second derived subgroup of $\cF_2$, it may be far from
surjective (its image can be arbitrary small in the real topology of
$G$) \cite{Th}).

\item[(ii)] If $\mathcal G$ is some infinite family of finite simple groups,
then any power word $w=x^n$ gives rise to the word map which is not
surjective for infinitely many groups (those whose order is not
prime to $n$). A conjecture of Shalev, asserting that such
phenomenon may arise only for power maps, turned out to be
over-optimistic, see \cite{JLO} for a counter-example. For simple
algebraic groups from Question \ref{sur-adj}, a power word may not
be surjective  either. See \cite{Stb}, \cite{Cha1}--\cite{Cha4} for
details.

\item[(iii)] Going over to analogues of Question \ref{sur-adj}
for finite-dimensional simple associative and Lie algebras over an
algebraically closed field, one immediately gets obvious
obstructions: there are nontrivial (associative and Lie) polynomials
that vanish identically; in addition, an associative polynomial map
of matrix algebras may take only central or trace-zero values; even
apart from these obvious obstructions, there are counter-examples to
surjectivity, see \cite{BGKP} and \cite{KBMR}; if the ground field
is not algebraically closed, the situation is even more complicated
(for example, it is an interesting question which of the
surjectivity results of \cite{KBMR}, obtained over quadratically
closed fields, survive over $\mathbb R$). Nevertheless, some of
techniques developed for associative polynomials may turn out to be
useful for attacking Question \ref{sur-adj}, see Remarks
\ref{rem-am} and \ref{rem-uni} below.
\end{itemize}
\end{remarks}

The opposite direction for studying surjectivity is more tricky,
even if the class of words $\mathcal W$ consists of only one word.
The first non-universal word to be considered is commutator.

\subsubsection{Commutator: Ore and beyond}

Let $w=xyx^{-1}y^{-1}\in \cF_2$. The obvious necessary condition for
the surjectivity of map \eqref{wordmap} is $\left<w(G)\right>=G$,
i.e., $G$ must be perfect. It is very far from being sufficient,
even within the class of finite groups, see, e.g., \cite{Is}. Making
the further assumption that $G$ is simple, one can say something
much more positive, namely that map \eqref{wordmap} is surjective in
each of the following cases:
\begin{itemize}
\item[(i)] $G$ is finite (Ore's problem, whose long history has been
finished in \cite{LOST1});
\item[(ii)] $G=S_{\infty}$, the infinite symmetric group \cite{Or},
\cite{DR} (more generally, {\it any} non-power word is surjective on
$S_{\infty}$ \cite{Ly});
\item[(iii)] $G=\mathbb G(k)$, where $\mathbb G$ is a semisimple adjoint linear algebraic group
over an algebraically closed field $k$ \cite{Ree} (and for many
other similar cases, such as semisimple Lie groups, etc., see
\cite{VW});
\item[(iv)] $G$ is the automorphism group of some ``nice''
topological or combinatorial object (e.g., the Cantor set), as
listed in \cite{DR}; see also \cite{BM}.
\end{itemize}

In connection with (iv), it is worth quoting I.~Rivin (see
\url{http://mathoverflow.} \newline
\url{net/questions/77398/how-did-ores-conjecture-become-a-conjecture}):
``It is a conjecture I attribute to myself, but probably goes back
to the ancients, that in every reasonable simple group every element
is a commutator''. Of course, the whole point here is in the word
``reasonable'': as any meaningful principle, Rivin's principle is
subject to breaking, see Section \ref{monster} for counter-examples.
In a positive direction, we suggest the following question:

\begin{quest}
For each of the infinite simple groups listed in \cite{DR}, is it
true that any word map is surjective (or, at least, has a ``large''
image)? (In other words, is it true that in each such group one can
solve word equations with ``generic'' right-hand side?)
\end{quest}

We also note that the Ore property is not just a standing alone
phenomenon in group theory. It has many interesting applications in
algebraic topology (see \cite{DR} and references therein) and
birational geometry \cite{Ku}.

\begin{remarks} \label{beyondOre}
There are several ways to go beyond Ore.
\begin{itemize}
\item[(i)] First, representing $1\ne g\in G$ in the form
$g=[x,y]$, one may require to make the choice of $x$ as ``uniform''
as possible. A typical result of such kind is a theorem of Gow
\cite{Gow}: every finite simple group $G$ of Lie type contains a
regular semisimple conjugacy class $C$ such that each semisimple
$g\in G$ can be represented in the form $g=[x,y]$ with $x\in C$. A
similar result in the case where $G$ is a split semisimple Chevalley
group over an infinite field follows from the prescribed Gauss
decomposition \cite{EG}. The case where $G$ is not split seems open.

\item[(ii)] One can require that $x,y$ satisfy some additional
properties. For example, in \cite{DR} Ore's theorem on the infinite
symmetric group has been strengthened by choosing $x$ and $y$ so
that they generate a transitive subgroup of $S_{\infty}$. It would
be interesting to get similar results in the other cases listed
above. For instance, one can ask how big can the subgroup
$\left<x,y\right>\subset G$ be. Note that we cannot guarantee
$\left<x,y\right>=G$, there are counter-examples among alternating
groups (we thank A.~Shalev for this remark).

\item[(iii)] Another well-known generalization is Thompson's
conjecture asserting the existence of a conjugacy class $C$ in every
simple group $G$ such that $C^2=G$. This problem has a positive
solution  for the split semisimple algebraic groups \cite{EG}.
However, it is still open for finite simple groups, in spite of the
major breakthrough made in \cite{EG} (see also \cite{YCW} and some
more recent results (see, e.g., \cite{BGK} for relevant
references)). The case of the infinite symmetric group has been
settled in \cite{Gr}. It is a natural question whether there is a
``gap'' between Ore's and Thompson's conjectures, i.e., whether
there is a simple group where Ore's conjecture holds and Thompson's
does not. N.~Gordeev pointed out that such a group exists: take
$G=A_{\infty}$, the finitary alternating group. Then evidently each
element of $G$ is a commutator but in every fixed conjugacy class
$C$ of $G$ any element moves a fixed number $N$ of points, hence any
product of two elements of $C$ moves at most $2N$ points. Therefore
$C^2\ne G$.

\item[(iv)] In the case where $L$ is a ``classical'' Lie algebra (i.e.,
the Lie algebra of a Chevalley group $G$ over a sufficiently large
field), most of the statements discussed above admit easy analogues
when the bracket stands for the Lie operation. Say, the analogue of
Ore's conjecture has been established in \cite{Br}, and an analogue
of Gow's theorem can easily be obtained looking at the linear map
$\varphi_x\colon L\to L$, $y\mapsto [x,y]$: its kernel is the
centralizer of $x$, and one can conclude that in the representation
$g=[x,y]$ the element $x$ can be chosen from a fixed $G$-orbit (with
respect to the adjoint action of $G$ on $L$). Further, for the Lie
algebras of such kind the analogue of Thompson's conjecture also
holds true \cite{Gorde}. However, all such generalizations seem
unknown for non-split simple classical Lie algebras, simple Lie
algebras of Cartan type in positive characteristic, and for
infinite-dimensional Lie algebras. The first case to be explored is
that of Kac--Moody algebras (as well as Kac--Moody groups, where the
prescribed Gauss decomposition is known \cite{MP}).

\item[(v)] Let now $A$ be an associative algebra, and the
polynomial under consideration be the additive commutator
$P(X,Y)=XY-YX$. Let $[A,A]$ denote the subalgebra additively
generated by the values of $P$. If $A\ne [A,A]$, there is no hope
for the surjectivity of the map $P\colon A^2\to A$. This happens
whenever $A$ is finitely generated over $\mathbb Q$ or, more
generally, if $A$ is any (not necessarily finitely generated)
PI-algebra \cite{Be}, \cite{Mes}. So reasonably interesting
questions only arise for the kernel of the reduced trace map (trace
zero matrices, for brevity). If $A=M_n(D)$, where $D$ is a division
algebra over a field and $n\ge 2$, then every trace zero matrix is
an additive commutator \cite{AR}. If, however, $n=1$, the question
is open (except for the special case of a central division algebra
of prime degree over a local field \cite{Ros}), and it is hard to
believe in an easy affirmative answer (that would imply the
affirmative solution of an old open problem on the cyclicity of any
central simple algebra of prime degree $p$ over a field of
characteristic $p$, see \cite{Ros} for details). From discussions
with L.~Rowen we got an impression that one should rather expect a
negative answer, and a counter-example could arise in the algebra of
generic matrices (see below). This shows that the questions posed in
(iv) for non-split simple Lie algebras are probably answered in the
negative. Note that a too straightforward attempt to extend the
theorem of Amitsur--Rowen to $M_n(R)$, where $R$ is an arbitrary
ring, fails in general \cite{RR2}, \cite{Ros}, \cite{Mes}. As to
infinite-dimensional simple algebras, the questions seem to be
totally unexplored, to the best of our knowledge.

\end{itemize}
\end{remarks}

\subsubsection{Engel words}
Apart from the commutator, there are some other interesting words
for which the surjectivity is known in several cases; see, e.g.,
\cite{BGK} for some relevant references. Here we only want to
mention the most natural generalization of the commutator, the
family of Engel words $e_n(x,y)=[[[x,y],y]\dots y]$. Their
surjectivity has been only established for groups $G=\PSL(2,q)$
(under some assumptions on $n$ and $q$) \cite{BGG}, $G=\SU(n)$
\cite{ET}, $G=\PSL_2(\CC)$ \cite{Kl}, \cite{BGG} and for simple
classical Lie algebras \cite{BGKP}. An answer to the following
question would be a natural continuation of \cite{Ree}.

\begin{quest} \label{engel}
Let $G$ be a connected semisimple adjoint linear algebraic group
over an algebraically closed field. Is it true that every Engel word
induces a surjective map $G^2\to G$?
\end{quest}

\subsection{Dominance: the Deligne--Sullivan trick and Amitsur's theorem}
We start with a theorem of Borel \cite{Bo} providing a sketch of a
somewhat new approach (the proof given in \cite{La} is essentially
the same as in the original paper). Our proof is based on using the
generic division algebra (see \cite{Fo2} for details on this
important object, including the history of its creation). Let $F$ be
a field (for simplicity, assumed of characteristic zero), let $n$ be
a positive integer, let $x^k_{ij}$ $(1\le i,j\le n$, $k\in \mathbb
N$) be independent commuting indeterminates. The $F$-subalgebra of
$M_n(F[x^k_{ij}])$ generated by the matrices $X_k=(x^k_{ij})$ is
called a ring of generic matrices. Denote it by $R=F\{X\}$. It is a
domain \cite{Am2}, and its ring of fractions $Q(R)$ is a central
division algebra of dimension $n^2$ over its centre $Z(Q(R))$
\cite{Am1}. The centre $Z(R)$ of $R$ consists of the central
polynomials (and is hence nontrivial for every $n$
\cite{Fo1},\cite{Ra}). The centre $C(Q(R))$ is the field of
quotients of $C(R)$. It is a long-standing open problem whether the
field $C(Q(R))$ is a purely transcendental extension of $F$, see the
surveys \cite{Fo2}, \cite{LB}, \cite{ABGV}.

\begin{theorem} \cite{Bo} \label{Borel}
Let $w\in \cF^d$ ($d\ge 2)$ be a nontrivial word, and let $G$ be a
connected semisimple algebraic group over a field $F$. Then the
induced morphism $w\colon G^d\to G$ of underlying algebraic
varieties is dominant (i.e., its image is Zariski dense).
\end{theorem}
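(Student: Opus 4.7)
The plan is to use generic matrices as a ``universal'' test point for dominance. By the results just quoted, the ring $R = F\{X_1,\ldots,X_d\}$ of $d$ generic $n\times n$ matrices is a domain (Amitsur), with division ring of fractions $Q(R)$; in particular each $X_k$ is invertible in $Q(R)$, and the evaluation $M := w(X_1,\ldots,X_d)$ defines an element of $Q(R)^{\times}$. Equivalently, $M$ is a $\GL_n$-valued point over the function field $K = F(G^d)$, and dominance of $w\colon G^d \to G$ translates into the statement that $M$ lies in no proper $F$-closed subvariety of $G$.

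Before attacking this, I would carry out routine reductions. Since dominance is geometric, we may pass to $\overline F$. A connected semisimple group is isogenous to a product of simple simply connected groups, and central isogenies and direct products preserve dominance of word maps on the factors, so we may assume $G$ is simple; we focus on $G = \SL_n$, the other simple types fitting into the same framework via an appropriate universal ``$G$-tuple'' construction in place of the generic matrix ring.

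Using the conjugation-equivariance of $w$, dominance then reduces to a single central claim: the coefficients of the reduced characteristic polynomial $\chi_M \in C(Q(R))[t]$ are algebraically independent over $F$. Indeed, once this holds, the discriminant of $\chi_M$ is nonzero, so $M$ is regular semisimple and its $\GL_n$-conjugacy class sweeps out a dense open subset in the corresponding fibre of the adjoint quotient $\GL_n \to \GL_n/\!\!/\GL_n$; combined with dominance onto that quotient, the image of $w$ fills a dense open subset of $G$.

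The main obstacle is precisely this algebraic independence statement, and it is here that the generic division algebra becomes indispensable. The coefficients of $\chi_M$ are the natural trace coordinates of $M$ in the centre $C(Q(R))$, which by the Procesi--Razmyslov description is the function field of the moduli variety of $d$-tuples of $n\times n$ matrices modulo simultaneous conjugation. Algebraic independence of these coordinates is equivalent to the non-vanishing of certain universal trace polynomials attached to $w$; one then produces, by a careful specialization of the $X_k$ to concrete matrices (the naive diagonal specialization is too weak, since commutator-type words collapse there) together with a Vandermonde-type argument, an explicit witness to non-vanishing. The delicate point is to find a specialization sensitive to \emph{every} non-trivial $w$, including deep commutators --- and this is exactly where the full strength of Amitsur's theorem is invoked, since it guarantees that the assignment ``non-trivial word $\mapsto$ non-zero element of $Q(R)$'' is injective on the free group.
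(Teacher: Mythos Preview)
Your reductions (base-change to $\overline F$, passage to simple factors, focus on $\SL_n$) are fine and match the paper's. The difficulty is that your proof stops exactly where the work begins. You correctly identify the central claim --- algebraic independence over $F$ of the coefficients of $\chi_M$ for $M=w(X_1,\dots,X_d)$ --- but you do not prove it. ``Careful specialization together with a Vandermonde-type argument'' is not an argument: you yourself note that naive specializations collapse commutator-type words, and you give no indication of what a successful specialization looks like. Your final appeal to Amitsur's theorem is also misplaced. What Amitsur gives is that $F\{X\}$ is a domain; the fact that $w(X_1,\dots,X_d)\ne 1$ (equivalently, that the free group embeds in $\GL_n$) is a separate statement and, more to the point, knowing $M\ne 1$ is far weaker than knowing that the trace coordinates of $M$ are algebraically independent. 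There is no visible mechanism in your sketch converting ``$M\ne 1$ in $Q(R)$'' into ``$\chi_M$ has generic coefficients''.

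The paper's proof takes a different route that avoids this difficulty entirely. It proceeds by \emph{induction on the rank $n$}, with the rank-one case handled as in Borel's original argument. The induction step needs only a single concrete fact: that some matrix $C$ in the image of $w$ has no eigenvalue equal to $1$. This is where Amitsur's theorem enters, and in a very precise way: if the \emph{generic} image matrix $C$ had $1$ as an eigenvalue, then its characteristic polynomial would factor as $(t-1)p_1(t)$, and Cayley--Hamilton would give $(C-I_n)\,p_1(C)=0$ inside the generic matrix ring --- a product of two nonzero elements equal to zero, contradicting the domain property. So Amitsur is used not to say ``$M\ne 1$'' but to forbid a specific polynomial factorization, and that is what drives the induction. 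Your proposal misses both the inductive structure and this particular application of Amitsur's theorem.
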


\begin{proof}
We present a sketch of proof in the case where $F$ is of
characteristic zero. Several parts of the proof are exactly the same
as in the original one. First, as dominance is compatible with any
extension of the ground field, we may and will assume $F$ to be
algebraically closed of infinite transcendence degree. Next, we may
assume that $G$ is simple of type $A_n$ (the reduction to the simple
case and the passage from $\SL_n$ to the other types are as in
\cite{Bo}). Further, it is enough to prove the dominance for the
case where $w$ is a product of commutators. Indeed, suppose that the
theorem is proven for such words, and let $w\colon G^d\to G$ be an
arbitrary word map. Consider the map $\tilde w\colon G^{2d}\to G$
defined as follows: $\tilde w(x_1,y_1,\dots ,x_d,y_d) :=
w([x_1,y_1],\dots ,[x_d,y_d])$. Then the image of $\tilde w$ is
dense, hence so is the image of $w$. Thus henceforth we assume $w\in
[\cF_d,\cF_d]$. Furthermore, it is enough to prove the dominance for
the map
\begin{equation} \label{gl}
w\colon (\GL_n)^d\to \SL_n.
\end{equation}
Indeed, the image of this map coincides with $w(\SL_n)$ because
every $g\in\GL_n$ can be replaced with $g/\det(g)$.

Let us now argue by induction on the rank. The case of rank 1 is
treated exactly as in \cite{Bo}. The key point is the induction
step. Assume that map \eqref{gl} is dominant for the rank $n-1$ and
prove that it is dominant for the rank $n$. As in the original
proof, it is enough to prove the existence of a matrix $C$ in the
image of $w$ such that none of its eigenvalues equals 1.

To this end, it suffices to prove that the image contains a {\it
generic} matrix with this property. Indeed, as soon as this is
established, by specialization arguments (which are legitimate
because of the assumption on the transcendence degree of $F$) we
obtain a non-empty Zariski open set of needed matrices.

To prove the existence of a generic matrix with the required
property, assume the contrary. Let $C$ denote a generic matrix from
the image of $w$. Since it has an eigenvalue 1, the characteristic
polynomial $p_C(t)$ is divisible by $t-1$, so $p_C(t)=(t-1)p_1(t)$.
The Cayley--Hamilton theorem then gives $(C-I_n)p_1(C)=0$, which is
in a contradiction with Amitsur's theorem stating that the ring of
generic matrices is a domain.
\end{proof}

\begin{remark} \label{rem-am}
In the proof presented above, the core induction argument is based
on Amitsur's theorem, instead of the Deligne--Sullivan argument used
in the original proof (which is based on going over to an
anisotropic form of $G$ and dates back to the unitary trick of
Weyl). We believe that this interrelation between matrix groups and
algebras is very important, and its potential is not exhausted. In
particular, such an approach may be useful for proving the
surjectivity of the word map for semisimple groups over
algebraically closed fields and, more generally, for getting a more
precise description of the image of the word map.

Let $w=\prod_j a_{i(j)}^{\pm 1}.$ Consider the values of $w$ on
invertible matrices, which will be denoted by the letters $a_i$,
$i=1,\dots, k$. The matrix $a_i$ satisfies the Cayley--Hamilton
equation
\begin{equation}     \label{eqHmCl}
a_i^n-\xi_1(a_i)a_i^{n-1}+\dots+(-1)^l\xi_l(a_i)a_i^{n-l}+\dots+(-1)^n\xi_n(a_i)=0
\end{equation}
where $\xi_l$ is the $l^{th}$ characteristic coefficient of the
matrix $a_i$. In particular, $\xi_1=\tr$, $\xi_n=\det$. In the zero
characteristic case, the $\xi_l$ are expressed via traces of powers.
Equation (\ref{eqHmCl}) can be rewritten in the form
$$
(\sum_{l=1}^{n-1}(-1)^{n-l}a_i^{n-l}\xi_{l-1}(a_i))\det(a_i)^{-1}=a_i^{-1}.
$$
Set $\psi(a_i)=\sum_{l=1}^{n-1}(-1)^{n-l}a_i^{n-l}\xi_{l-1}(a_i)$
and rewrite the product $w=\prod_j a_{i(j)}^{\pm 1}$, replacing
$a_{i(j)}^{-1}$ with $\psi(a_{i(j)})$ and leaving the factors
$a_{i(j)}$ unchanged. We will obtain a polynomial (in the signature
enlarged by the characteristic coefficients, in other words, a
polynomial with forms), which is a product of polynomials in one
variable.

There are some optimistic considerations regarding the study of
images of such polynomials. Let us look at the construction of
homogeneous polynomials whose image does not coincide with the whole
space \cite{KBMR}. Let $P$ be an arbitrary matrix polynomial, and
let $c\in k$. Consider the product
$$
F=(\lambda_1(P)-c\lambda_2(P))(\lambda_2(P)-c\lambda_1(P))P=
((1+c^2+2c)\det(P)-c\tr(P)^2)P.
$$
It vanishes if the ratio of the eigenvalues $\lambda_i(P)$ equals
$c^{\pm 1}$ or they are both zero. Therefore the image of $F$ does
not contain nonzero matrices with such a ratio of eigenvalues, and
this construction is essential for the nature of the problem. If we
are given a nontrivial product of polynomials in one variable in
which several variables enter, then a requirement on the resulting
relation between eigenvalues cannot be determined by one factor (if
its determinant is not equal to zero). Of course, $\psi$ is obtained
by canceling the determinant, but nevertheless there is some ground
for optimism regarding making Borel's theorem more precise.
\end{remark}

\begin{remark} \label{rem-uni}
Let us give an example of a more complicated argument showing the
existence of a unipotent element in the image of a multilinear
polynomial $P$ in the space of matrices of second order (assuming it
has a noncentral value with nonzero trace).

So let $P(X_1,\dots,X_n)$ be a multilinear polynomial, and assume
the contrary, i.e., $P$ has no unipotent values. Fix all variables
except for $X_1$. Then $P_{X_2,\dots,X_n}(X_1)=P(X_1,\dots,X_n)$ is
a linear in $X_1$ function, meeting the discriminant surface at
points of total multiplicity 2. Since we assume the absence of
unipotent values, each such point is either scalar or nilpotent. If
one point, $X_1^0$, is a nonzero scalar, and the other, $X_1^1$, is
nilpotent, then at the point $X_1^0+X_1^1$ we obtain a matrix
proportional to a unipotent one with some coefficient $\lambda\ne
0$, and then at the point $(X_1^0+X_1^1)/\lambda$ we shall obtain a
unipotent value.

Further, if at both points we have a scalar value, then the original
value of $P$ is scalar, and if both values are nilpotent, then the
original value has zero trace. Therefore, if the value of
$P(X_1,\dots,X_n)$ is not scalar and has nonzero trace, then while
moving any coordinate we observe coincidence of the intersection
points with the discriminant surface, i.e., tangency to the
discriminant surface.

Then one can extract square root of the discriminant
$(\lambda_1-\lambda_2)^2$. In other words,
$q=\lambda_1(P(X_1,\dots,X_n))- \lambda_2(P(X_1,\dots,X_n))$ is a
polynomial in entries of $X_1,\dots,X_n$. The group $\SL_2$ acts by
simultaneous conjugations on the $X_i$ and is connected. Then, under
such an action, $q$ is mapped to $\pm q$, and connectedness
guarantees the invariance of $q$, by the first fundamental theorem
(established in \cite{Pr} in characteristic zero and in \cite{Do} in
positive characteristic).

Since $q$ is a polynomial in characteristic values of the products
of the $X_i$, so are $\lambda_1(P), \lambda_2(P)$ because
$\lambda_1(P)+\lambda_2(P)=\tr(P)$.

By the Cayley--Hamilton theorem,
$(P-\lambda_1(P))(P-\lambda_2(P))=0$ which also contradicts
Amitsur's theorem on zero divisors. Hence $P$ does have unipotent
values.

\medskip

We hope that such kind of reasoning can be helpful in getting a more
precise description of the image of the word map in the set-up of
Borel's theorem.

\end{remark}

\begin{remark}
Note that an idea of using generic matrices has been recently used,
in a slightly different context of ``universal localization'', in
\cite{HSVZ1} and \cite{Step}, for getting subtle information on
commutators in Chevalley groups over rings.
\end{remark}

\begin{remark}
There are results of Borel's flavour, stating that for some infinite
groups $G$ any ``generic'' element $g\in G$ falls into the value set
of any non-power word $w$ \cite{Ma}, \cite{DT}.
\end{remark}

\begin{remark}
In the case of Lie algebras, an analogue of Borel's theorem for Lie
polynomials $P$ which are not identically zero, was established in
\cite{BGKP} for semisimple Chevalley algebras (modulo several
exceptions over fields of small characteristic), under the
additional assumption that $P$ is not an identity of
$\mathfrak{sl}(2)$. It is not clear whether this assumption can be
removed. The answer heavily depends on whether one can extend a
construction of so-called $3$-central polynomials (see, e.g.,
\cite[Theorem~3.2.21]{Row}) to the Lie case. (Recall that a
polynomial $P$ is called $n$-central is $P^n$ is central but $P$ is
not.) If such polynomials exist, they provide an example of a map
which is not dominant. Probably, multilinear Lie polynomials with
such a property do not exist, and in this case one can drop the
assumption mentioned above.
\end{remark}

\begin{remark}
The case of associative algebras is much more tricky. First, one has
to take into account the obvious obstructions to dominance, and
assume that the polynomial $P$ is non-central and contains at least
one value with nonzero trace. Even under these assumptions, there
are counter-examples to dominance for maps on $M_2(K)$; to avoid
them, one has to make additional assumptions on $P$ (say, to assume
that it is semi-homogeneous). This assumption is not enough for
$3\times 3$-matrices. One can show that if $P$ is a nonscalar
polynomial such that not all its values on $A=M_3(K)$ are 3-scalar
or traceless, then its value set $P(A)$ is dense in $A$ (moreover,
if $N$ is the set of non-diagonalizable matrices, then $P(A)\cap N$
is dense in $N$). If $P(A)$ lies in $\mathfrak{sl}_3(K)$ and
contains a matrix which is not 3-scalar, then $P(A)$ is dense in
$\mathfrak{sl}_3(K)$.

If $P$ is assumed multilinear, the main open question is the
validity of the Kaplan\-sky--L${}^{\prime}$vov conjecture which
states that the image of $P$ is either 0, or $K$, or
$\mathfrak{sl}_n(K)$, or $M_n(K)$. As a first step, we would suggest
to look at a downgraded version of this conjecture where we weaken
it by allowing the image to be a dense subset of either
$\mathfrak{sl}_n(K)$, or $M_n(K)$. As above, one of the key problems
is the existence of $n$-central multilinear polynomials. This
problem is related to subtle properties of division algebras and is
not discussed in this survey. See \cite{KBMR} for some details.

To sum up, we present a list of questions which seem to be a good
starting point.
\end{remark}

\begin{quest} Let $P\colon M_2(K)^d\to M_2(K)$.
\begin{itemize}
\item[(i)] If $P$ is multilinear, what are its possible images for $K=\mathbb R$? In
particular, does the Kaplansky--L${}^{\prime}$vov conjecture hold in
this case?
\item[(ii)] Let $K$ be quadratically closed and $P$ be an arbitrary
(not necessarily homogeneous) polynomial. Is it true that its value
set is either the set $\{F([x,y])\}$ of values of a traceless
polynomial, or the whole $M_2(K)$ (up to Zariski closure)?
\end{itemize}
\end{quest}

For arbitrary $n\ge 3$ we ask weaker questions.

\begin{quest} Let $P\colon M_n(K)^d\to M_n(K)$ be a multilinear
polynomial.
\begin{itemize}
\item[(i)] Can it be $n$-central?
\item[(ii)] Suppose that $P$ is not $n$-central. Is it true that its
image is then dense provided it contains a matrix with nonzero
trace? Can it contain non-diagonalizable matrices?
\end{itemize}
\end{quest}

Similar questions can be asked for traceless polynomials.

\begin{remark}
Being even more modest, one can start with multilinear Lie
polynomials, asking similar questions. Even the case of algebras of
small rank is open. Let us mention an analogue of the
Kaplansky--L${}^{\prime}$vov conjecture.
\end{remark}

\begin{quest}
Let $L$ be a Chevalley Lie algebra, and let $P$ be a multilinear Lie
polynomial. Is it true that the image of $P$ on $L$ is either $0$,
or $L$?
\end{quest}

\begin{remark}
Even a further downgrading might be of interest, in view of eventual
generalizations of Makar-Limanov's Freiheitsatz \cite{ML} to the
case of positive characteristic: given a polynomial map $P\colon
M_n(K)^d\to M_n(K)$ such that $n\gg\deg P$, can one bound from above
the codimension of its image by a ``reasonable'' function in $n$?
\end{remark}

\subsection{Words with small image}
 If $G$ is a finite simple group, the verbal subgroup $\left<w
 (G)\right>$ equals either 1 or $G$. However, for any $G$ one can
 expect the existence of $w$ such that the actual image $w(G)$ is
 fairly small (though different from 1). Such examples were recently
 constructed in \cite{KN} and \cite{Le}.

 \begin{quest} (N. Nikolov)
 \footnote{This question was answered in the affirmative by A.~Lubotzky
 when the paper was in print. See: A.~Lubotzky, {\it Images of word maps in finite simple
 groups}, arXiv:1211.6575.}
 Let $G$ be a finite simple group, and let $X\subset G$ be a union of
 conjugacy classes. Does there exist $w$ such that $w(G)=X$?
 \end{quest}

 Note that a similar question for polynomials on matrix algebras
 over  finite rings was answered in the affirmative in \cite{Chu}.

\section{Width} \label{sec:Engel}

\subsection{General words and polynomials}
In the situation where map \eqref{wordmap} is not surjective (or
this is not known), but $\left< w(G)\right>=G$, one can ask how many
elements of $w(G)$ (or of $w(G)^{\pm}$) are needed to represent
every element of $G$. (In the case where $\left< w(G)\right>\ne G$,
one represents every element of $\left< w(G)\right>$.) The smallest
such number is called the $w$-width of $G$. We denote it by
$\wid_w(G)$. If $G$ does not have finite $w$-width, it is said to be
of infinite $w$-width. The reader can find comprehensive discussions
of this notion in \cite{Se}, \cite{Ni} (see also \cite{BGK} for a
survey of some more recent developments in the case where $G$ is a
finite simple group). We focus here, as above, on parallels with the
case of associative algebras.

First note that as soon as we establish, for some topological group
$G$, any kind of Borel's dominance theorem, in the sense that the
image of $w$ contains a dense open subset, we immediately conclude
by a standard argument that $\wid_w(G)\le 2$. Thus this is the case
if $G$ is (the group of points of) a connected semisimple algebraic
group over an algebraically closed field. For arbitrary algebraic
groups over fields, one can still prove their finite width with
respect to any nontrivial word and any element of the verbal
subgroup \cite{Mer}. If, however, we go over to algebraic groups
over arbitrary rings, the situation changes dramatically, see the
next section.

In the class of finite simple groups, dominance arguments do not
make much sense. However, the observation made in \cite{La} that
word maps still have, in a sense, a large image when viewed within
an infinite family of finite simple groups, gave rise to a series of
wonderful results on uniform word width, christened by Shalev
``Waring type properties''. Making the long story short, we just
mention the papers \cite{LS}, where the existence of such a uniform
bound was established, and \cite{LST1}, \cite{LST2}, where, as a
culmination of efforts of many people, the uniform bounds
$\wid_w(G)\le 2$ (resp. $\wid_w(G)\le 3$) have been established for
all sufficiently large finite simple (resp. quasisimple) groups and
all words $w\ne 1$. See \cite{BGK} for more references and details.
Note that quite recently Shalev with his collaborators extended many
results of this type to some simple algebraic groups over $p$-adic
integers, see \cite{Sh2} for a list of relevant questions. In the
case of polynomial maps of algebras, we propose a notion parallel to
word width in the group case.

\begin{defn}
Let $P(X_1,\dots ,X_d)$ be an associative (resp. Lie) polynomial,
let $A$ (resp. $L$) be an associative (resp. Lie) algebra over a
ring $R$, let $P\colon A^d\to A$ (resp. $P\colon L^d \to L$) denote
the induced map, and let $V_P$ be the $R$-module spanned by the
image of $P$. If there exists a positive integer $m$ such that every
element $v\in V_P$ can be represented as a sum of $m$ values of $P$,
we call the least $m$ with such property the $P$-width of the
algebra and denote it $\wid_P(A)$ (resp. $\wid_P(L)$). Otherwise, we
say that the algebra is of infinite $P$-width.
\end{defn}

We are not aware of any general results concerning this notion,
beyond those that follow from the surjectivity or dominance (see,
however, next sections for some particular cases). The following
question seems conceptually important.

\begin{quest} \label{alg-width}
Let $k$ be an infinite field. Let $\mathcal A$ denote the class of
finite-dimensional central simple $k$-algebras. Let $\mathcal P$
denote the class of associative polynomials such that none of them
is central for some $A\in\mathcal A$. Is it true that all
$A\in\mathcal A$ are of finite $P$-width for all $P\in \mathcal P$?
If yes, is it true that $\sup_{P\in\mathcal P, A\in\mathcal
A}\wid_P(A)<\infty$?
\end{quest}

This question makes sense in more restrictive set-ups, when either
some class of polynomials or some class of algebras, or both, are
fixed (and may be narrow enough, even consisting of one element, see
some examples below). Similar questions may be posed for Lie
polynomials on finite-dimensional simple Lie algebras. Finally, in
both cases (associative and Lie) one can also consider
infinite-dimensional simple algebras, finitely or infinitely
generated. All this seems completely unexplored. Below we consider a
couple of more concrete settings.

\subsection{Commutator width}
The case of the width of various groups $G$ with respect to
$w=xyx^{-1}y^{-1}$ got much attention in the literature. Without
pretending to giving a comprehensive survey (various aspects are
reflected in \cite{Se}, \cite{Ni}, \cite{HSVZ2}, \cite{BGK}), we
only present some references.
\begin{itemize}
\item As mentioned above, the finite simple groups have commutator
width 1. As to finite quasi-simple groups, it is at most 2, and all
the cases when it actually equals 2 are listed \cite{LOST2}.

\item As mentioned above, if $G$ is a Chevalley group over
an infinite field, its commutator width is at most 2, in view of
Borel's theorem. However, this breaks down for Chevalley groups over
rings, which tend to have very few commutators (the type of
behaviour called ``anti-Ore'' in \cite{HSVZ1}). In view of examples
of groups of infinite commutator width \cite{DV}, we can also call
it ``anti-Waring''. The situation improves if we go over to infinite
matrices: most Chevalley groups of ``infinite rank'' return to
Waring (if not to Ore) behaviour: it is known that their commutator
width is at most 2 \cite{HS}, \cite{DV}; similar results were
recently obtained for other groups of infinite matrices, see
\cite{GH} and references therein. This gives rise to a natural
question:

\end{itemize}

\begin{quest}
Is it true that the groups of commutator width at most 2 listed in
\cite{DV} are of finite word width with respect to any nontrivial
word $w$? Is the image of $w$ ``large'' (i.e., do we have any
analogue of Borel's theorem)?
\end{quest}

Here are some parallel results and questions for the additive
commutator $$P(X,Y)=XY-YX$$ in associative algebras.
\begin{itemize}
\item The commutator width of a matrix algebra $A=M_n(R)$ over any
ring is at most two (\cite{AR} for the case where $R$ is a division
algebra, \cite{Ros} for an arbitrary commutative ring $R$,
\cite{Mes} in general).

\item Moreover, in the representation $M=[X,Y]+[Z,T]$ one can fix
$X$ and $Z$ which are good for every $M\in A$ (in \cite{AR} only $Z$
was fixed, and this was strengthened in \cite{RR1}, \cite{Ros} and
\cite{Mes} using genericity arguments).

\end{itemize}

This gives rise to the following notion, slightly resembling the
notion of one-and-a-half-generation of simple groups \cite{Stei},
\cite{GK}.
\begin{defn} \label{fracwid-alg}
Let $A$ be an associative algebra of commutator width $m$. If there
exist $X_1,\dots ,X_s\in A$ such that every $M\in A$ can be
represented in a form
$$
M=[X_1,Y_1]+\dots + [X_s,Y_s]+[Z_{s+1},Y_{s+1}]+\dots +[Z_m,Y_m],
$$
and $s$ is maximal with this property, we say that $A$ is of
fractional commutator width $m-\frac{s}{2m}$.
\end{defn}
Thus the matrix algebras are of fractional commutator width one and
a half (the result of \cite{AR} gave only one and three-quarters).

One can define fractional commutator width for groups in a similar
fashion and ask the following question.

\begin{quest}
Which of the groups $G$ having commutator width 2 are of fractional
commutator width one and a half? one and three-quarters?
\end{quest}

One can start with Chevalley groups of infinite rank \cite{DV} and
14 finite quasisimple groups of width 2 from the list of
\cite{LOST2}.

One can define fractional width for more general words and
polynomials. We leave this to the interested reader.

\subsection{Power width}
Again, we only briefly quote some results on power width in groups.
For finite simple groups, the story which started with establishing
the existence of uniform finite width in \cite{MZ}, \cite{SW} led to
almost conclusive results in \cite{GM} where for many powers it was
proved that this width equals 2. The uniform width for general
finite groups was established in \cite{NS}. For infinite groups
(e.g., for Chevalley groups over rings), the problem is almost
unexplored. As in the commutator case, some pathologies have been
discovered (see the next section).

For matrix algebras $M_n(R)$, we only mention the pioneering paper
\cite{Va} where the problem of representing a matrix over a
commutative ring as a sum of $d^{th}$ powers was considered, and
some general results of finite power width flavour were obtained
(see \cite{KG} for the history of the problem and more references),
and \cite{Pu2} for some results for matrices over noncommutative
rings, in particular, for central simple algebras.

\subsection{Monsters} \label{monster}
It was unknown for a long time whether there exists a simple group
of commutator width greater than 1. The first counter-example
\cite{BG} appeared in the context of symplectic geometry and gave a
simple (infinitely generated) group of infinite commutator width.
Later on, using various contexts, there were constructed simple
groups of infinite commutator width which are finitely generated
(\cite{Mu1}, using small cancellation theory) or finitely presented
(\cite{CF}, among quotients of Kac--Moody groups); among groups
appearing in \cite{Mu1}, there are those of arbitrary large finite
commutator width. In \cite{Mu2}, there were constructed simple
groups of infinite square width (and hence infinite commutator
width, because of the equality $[x,y]=x^2(x^{-1}y)^2(y^{-1})^2$,
showing that finite commutator width implies finite square width).
We refer the interested reader to \cite{GG} for a walk along the zoo
of monsters with such anti-Ore and anti-Waring behaviour (and some
conceptual explanations of such phenomena), which grew up in
differential-geometric environment, following the spirit of
\cite{BG}; they are fed up using some advanced techniques, including
quasi-morphisms and quantum cohomology.

It is an interesting question whether such monsters exist among Lie
algebras. The case of finite-dimensional algebras of Cartan type
over fields of positive characteristic was already mentioned above.
As to infinite-dimensional algebras in characteristic zero,
discussions with E.~Zelmanov give a hint that there are infinitely
generated simple Lie algebras of infinite width and finitely
generated ones of arbitrary large finite width. Elaborating such
examples could give a clue to understanding the situation with
associative algebras.

As a final remark, we should mention that apart from Lie algebras,
little is known on the problems discussed in this paper in the case
of non-associative algebras; see, however, \cite{Gordo} and
\cite{Pu1} for some particular results of this flavour. In our
opinion, it would be interesting to study the image of more general
polynomial maps for some classical examples, such as Cayley
octonions, simple quadratic Jordan algebras, simple exceptional
Jordan algebras $HC_3$, and the like.

\noindent {\it Acknowledgements}. This research was supported by the
Israel Science Foundation, grant 1207/12. Kunyavski\u\i \ and
Plotkin were supported in part by the Minerva Foundation through the
Emmy Noether Research Institute of Mathematics. Some questions
raised in this paper were formulated after discussions with
N.~L.~Gordeev, L.~H.~Rowen, A.~Shalev, N.~A.~Vavilov and E.~I.
Zelmanov to whom we are heartily grateful.

\end{document}